\documentclass[11pt,reqno]{amsart}
\usepackage{amsmath, amssymb, amsthm}
\usepackage{xcolor}

\setlength{\textheight}{220mm} \setlength{\textwidth}{155mm}
\setlength{\oddsidemargin}{1.25mm}
\setlength{\evensidemargin}{1.25mm} \setlength{\topmargin}{0mm}

\parskip .04in

   \makeatletter
\def\proof{\@ifnextchar[{\@oproof}{\@nproof}}
\def\@oproof[#1][#2]{\trivlist\item[\hskip\labelsep\textit{#2 Proof of\
#1.}~]\ignorespaces}
\def\@nproof{\trivlist\item[\hskip\labelsep\textit{Proof.}~]\ignorespaces}
%\smartqed

\numberwithin{equation}{section}
\newtheorem{theorem}{Theorem}

\begin{document}
\author{Ajai Choudhry and Bibekananda Maji}

\address{Ajai Choudhry\\
13/4,  Clay Square,  Lucknow 226001, India.}
\email{ajaic203@yahoo.com}

%\author{Bibekananda Maji}
\address{Bibekananda Maji\\ Department of Mathematics \\
Indian Institute of Technology Indore \\
Simrol,  Indore,  Madhya Pradesh 453552, India.} 
\email{bibek10iitb@gmail.com,  bmaji@iiti.ac.in}

\thanks{$2020$ \textit{Mathematics Subject Classification.} Primary 11D09; Secondary 11E25, 11N36,  11N37.\\
\textit{Keywords and phrases.} Sums of two squares; consecutive integers; arithmetic progessions.}

\Large \title{Finite sequences of  integers expressible as sums of two squares}

\date{}

\begin{abstract}
This paper is concerned with finite sequences of integers that may be written  as sums of  squares of two nonzero integers. We first find infinitely many integers $n$ such that  $n, n+h$ and $n+k$ are all  sums of two squares where $h$ and $k$ are  two arbitrary integers,  and as an immediate corollary  obtain, in parametric terms, three consecutive integers that  are  sums of two squares. Similarly we obtain $n$ in parametric terms such that all the four integers  $n,  n+1,  n+2,  n+4$ are sums of two squares. We also find infinitely many integers $n$ such that all the five integers $n,  n+1,  n+2,  n+4, n+5$ are sums of two squares, and finally, we find infinitely many arithmetic progressions, with common difference $4$,  of five integers  all of which  are sums of two squares. 
\end{abstract}

%\keywords{Sums of two squares; consecutive integers; arithmetic progressions.}

%\thanks{2020 \textit{Mathematics Subject Classification.} Primary 11D09; Secondary 11E25.\\
%\ccode{Mathematics Subject Classification 2010: Primary 11D09; Secondary 11E25.}

\maketitle
\section{Introduction}
This paper is concerned with finite sequences of integers that are all expressible as sums of two perfect squares of integers. It may be recalled in this context that the problem of finding three consecutive integers, all expressible as sums of two squares, dates back at least to 1903 when the first numerical solutions to the problem were found \cite[p.\ 252]{Di}. Subsequently  a few   one-parameter solutions have been found \cite{CD}. The online encyclopedia of integer sequences also contains a reference to the sequence of integers $k$ such that $k, k+1$ and $ k+2$ are all sums of two squares \cite{OE}.

A related problem posed by Littlewood asks whether for given distinct positive integers $h$ and $k$, there exist infinitely many integers $n$ such that $n, n+h, n+k$ are all sums of two squares. Hooley \cite{Ho} provided an affirmative answer to this question without, however, giving an explicit construction of such integers.

We give an alternative solution to Littlewood's problem. In fact, given any two integers $h$ and $k$, whether positive or negative, we find a three-parameter solution to the problem of finding $n$ such that $n, n+h$, and $n+k$ are all expressible as sums of two  squares of nonzero integers. This solution immediately yields a three-parameter solution to the problem of finding integers $n$ such that the three consecutive integers $n-1, n$ and $n+1$ are sums of two nonzero squares.

We also explore longer sequences of integers that are all expressible as sums of two squares. It has been shown in \cite{CD} that any triple of consecutive integers that are sums of two squares must start with a multiple of 8. Further, since a perfect square can only be congruent to 0 or 1 modulo 4, any integer congruent to 3 modulo 4 cannot be a sum of two squares.  It follows that if $n, n+1$ and $n+2$ are all expressible as sums of two squares, $n+3$ cannot be so expressed,  and if, in addition,   $n+4$ and $n+5$ are  also  sums of two squares, then $n+6$ and $n+7$ cannot be  sums of two squares.

We obtain in this paper infinitely many integers $n$ such that the five integers $n, n+1, n+2, n+4, n+5$ can all be written as  sums of squares of two nonzero integers.  Similarly,  we obtain infinitely many integers $n$ such that the  five integers in the arithmetic progression $n +4j, j=0, \ldots, 4$, are all sums of squares of two nonzero integers.

\section{Infinitely many integers $n$ such that $n, n+h$ and $n+k$ are all sums of two squares}\label{Mainth}
We will now give a constructive solution to the problem posed by Littlewod as mentioned in the Introduction.
\begin{theorem}\label{Thmhk} Given two arbitrary integers $h$ and $k$, whether positive or negative, there exist infinitely many integers $n$ such that the integers $n, n+h$ and $n+k$ are all expressible as sums of  squares of two nonzero integers. 
\end{theorem}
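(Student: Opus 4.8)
The plan is to recast the statement as the solvability in integers of the simultaneous system $a^{2}+b^{2}=n$, $c^{2}+d^{2}=n+h$, $e^{2}+f^{2}=n+k$ with $abcdef\neq0$, and to exhibit a polynomial family of solutions depending on (three) free parameters; since such a family is non-constant in $n$, it yields infinitely many admissible $n$.

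A first, naive attempt is to fix one of the two squares in each representation: if one could find a single integer $\gamma$ with $\gamma^{2}+h=\delta^{2}$ and $\gamma^{2}+k=\epsilon^{2}$, then $n=(\alpha t+\beta)^{2}+\gamma^{2}$ would do the job, with $n+h=(\alpha t+\beta)^{2}+\delta^{2}$ and $n+k=(\alpha t+\beta)^{2}+\epsilon^{2}$, $t$ free. But this already requires $h$ and $k$ to be differences of two squares, hence fails whenever $h$ or $k\equiv2\pmod4$, and a common $\gamma$ need not exist in any case; so the two squares appearing in the three representations must be allowed to vary genuinely.

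The device I would use is that the form $x^{2}+y^{2}$ is unchanged under complex conjugation, so for a Gaussian integer $\lambda$ the linear polynomials $\lambda t+\beta_{0}$ and $\bar\lambda t+\delta_{0}$ have norms with the same leading coefficient $|\lambda|^{2}$. Looking for the three representations in the shape
\[
n=|\lambda t+\beta_{0}|^{2},\qquad n+h=|\bar\lambda t+\delta_{0}|^{2},\qquad n+k=|\lambda t+\epsilon_{0}|^{2},
\]
one forces the coefficients of $t$ to agree, which pins $\delta_{0},\epsilon_{0}$ to lines through $\bar\beta_{0}$, and the constant terms then give relations
\[
h=|\lambda|^{2}\mu^{2}+2\mu\,\theta_{+},\qquad k=|\lambda|^{2}\rho^{2}+2\rho\,\theta_{-},
\]
where $\mu,\rho\in\mathbb Z$ and $\theta_{+},\theta_{-}$ are \emph{independent} integer-linear functions of $\beta_{0}$. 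Treating $h,k$ as prescribed, one chooses a sum of two squares $w=|\lambda|^{2}$ and integers $\mu,\rho$, and solves these two relations back for $\beta_{0}$.

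The crux, and the step I expect to be the main obstacle, is uniformity in $h$ and $k$: solving back for $\beta_{0}$ imposes congruence conditions (on $h$, on $k$, and sometimes on $h-k$, modulo $4$ and modulo the components of $\lambda$), which the simplest shapes cannot always satisfy. I would remove these either by letting the three representations depend quadratically rather than linearly on $t$ — this introduces enough further coefficients to kill every residual congruence for all $h,k$ — or by a short case analysis on the residues of $h$ and $k$ modulo $4$, with the degenerate cases $k=0$ and $k=h$ handled separately. Once a parametric solution valid for all $h,k$ is available, it remains only to check that the resulting polynomial for $n$ has nonzero leading coefficient, so the family is genuinely infinite, and that $a,\dots,f$ are nonzero outside a finite set of parameter values, both of which are routine.
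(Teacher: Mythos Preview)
Your overall plan---exhibit a polynomial family and clear residual congruences by a case split modulo $4$---matches the shape of the paper's argument, but the specific linear Gaussian-integer ansatz has a flaw that is not merely a congruence obstruction. With $\lambda=p+qi$ and $\beta_{0}=r+si$, matching the $t$-coefficient of $|\lambda t+\beta_{0}|^{2}$ and $|\bar\lambda t+\delta_{0}|^{2}$ forces $\delta_{0}=\bar\beta_{0}+\mu\,i\bar\lambda$, whence $h=|\lambda|^{2}\mu^{2}+2\mu(rq-sp)$; doing the same with $|\lambda t+\epsilon_{0}|^{2}$ forces $\epsilon_{0}=\beta_{0}+\rho\,i\lambda$ and $k=|\lambda|^{2}\rho^{2}-2\rho(rq-sp)$. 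Thus $\theta_{+}$ and $\theta_{-}$ are \emph{not} independent linear functionals of $\beta_{0}$: both equal $\pm(rq-sp)$. Eliminating this common $\theta$ leaves the single Diophantine constraint $|\lambda|^{2}\mu\rho(\mu+\rho)=k\mu+h\rho$ on the data you were hoping to choose freely, and $\beta_{0}$ is determined only along the line $rq-sp=\theta$, not by the pair $(h,k)$. The auxiliary parameter $t$ has absorbed exactly the degree of freedom you needed to decouple $h$ from $k$; going quadratic in $t$ might rescue this, but that is a different construction which you would still have to carry out.

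The paper sidesteps the issue by dropping $t$ altogether: it writes $n=x^{2}+y^{2}$, $n+h=(x+u_{1})^{2}+(y+u_{2})^{2}$, $n+k=(x+v_{1})^{2}+(y+v_{2})^{2}$---in your language, two independent Gaussian shifts $u,v$ of a single base point $z=x+iy$. Then $h=2\operatorname{Re}(\bar z u)+|u|^{2}$ and $k=2\operatorname{Re}(\bar z v)+|v|^{2}$ are two genuinely independent real-linear equations in $x,y$ whenever $u_{1}v_{2}\ne u_{2}v_{1}$, solved over $\mathbb{Q}$ in one line. The paper then performs precisely the mod-$4$ case analysis you anticipated (parametrising $u,v$ so that $u_{1}v_{2}-u_{2}v_{1}\in\{1,2\}$ and splitting on the parities of $h$ and $k$) to force $x,y\in\mathbb{Z}$. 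The three free parameters live in the residual choice of $u,v$, not in an auxiliary $t$.
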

\begin{proof} We will actually construct infinitely many integers $n$  such that $n, n+h$ and $n+k$ are all sums of two squares. To obtain such integers,  we will solve the following three simultaneous equations 
in integers:
\begin{align}
n & = x^2+y^2, \label{eqn} \\
n+h & = (x+u_1)^2+(y+u_2)^2, \label{eqnh} \\
n+k &= (x+v_1)^2+(y+v_2)^2. \label{eqnk}
\end{align}

Eliminating $n$ first between Eqs. \eqref{eqn} and \eqref{eqnh}, and then between Eqs. \eqref{eqn} and \eqref{eqnk}, we get the following two equations, respectively:
\begin{align}
u_1^2 + 2u_1x + u_2^2 + 2u_2y - h& =0, \label{eqh} \\
v_1^2 + 2v_1x + v_2^2 + 2v_2y - k& =0. \label{eqk}
\end{align}

Now Eqs. \eqref{eqh} and \eqref{eqk} may be considered as two linear equations in $x$ and $y$, and on solving them, we get,
\begin{equation}
\begin{aligned}
x & = -(u_1^2v_2 + u_2^2v_2 - u_2v_1^2 - u_2v_2^2 - hv_2 + ku_2)\\
 & \quad \quad \times \{2(u_1v_2 - u_2v_1)\}^{-1}, \\
y & = (u_1^2v_1 - u_1v_1^2 - u_1v_2^2 + u_2^2v_1 - hv_1 + ku_1)\\
   & \quad \quad \times \{2(u_1v_2 - u_2v_1)\}^{-1}. 
\end{aligned}
\label{valxy}
\end{equation}

We have to  choose integer values for $u_i, v_i, i=1, 2$, such that the values of $x$ and $y$ given by \eqref{valxy} are integers. It seems reasonable to try integer values of $u_i, v_i, i=1, 2$, such that  $u_1v_2 - u_2v_1$ is either 1 or 2. We may thus  choose
\begin{equation}
u_1 = ft + 1, u_2 = t, v_1 = fgt + f + g, v_2 = gt + 1, \label{valuvtry1}
\end{equation}
when $u_1v_2 - u_2v_1=1$ or we may choose
\begin{equation}
u_1 = ft + 1, u_2 = t, v_1 = fgt + 2f + g, v_2 = gt + 2, \label{valuvtry2}
\end{equation}
when $u_1v_2 - u_2v_1=2$, 
where in both cases $f, g$ and $t$ are arbitrary integer parameters. 

We will still have $2$ or $2^2$ in the denominator of the values of $x$ and $y$ but we will show that we can always choose the parameters $f, g$ and $t$ such that we get integer values for $x$ and $y$. As we shall see later, it is not necessary to choose the parameters $u_i, v_i, i=1, 2$, such that  $u_1v_2 - u_2v_1$ is either 1 or 2 but the values of $u_i, v_i$, given by \eqref{valuvtry1} or \eqref{valuvtry2} with suitable choices of the parameters $f, g, t$ suffice to prove our theorem.

We have to  consider, without loss of generality, the following three cases:

(i) If  both integers $h$ and $k$ are odd, we use the values of  $u_i, v_i, i=1, 2$, as defined by \eqref{valuvtry1} where we  take all the three parameters $f, g$ and $t$ to be arbitrary even integers when the values of both $x$ and $y$ given by \eqref{valxy} are readily seen to be integers.

(ii)  If  one of the two integers $h$ and $k$ is odd and the other is even, we may assume that $h$ is even and $k$ is odd. We again take $u_i, v_i, i=1, 2$, as defined by \eqref{valuvtry1}  and  we now take both $f, g$ as arbitrary even integers and $t$ as an arbitrary odd integer,  and   again the values of both $x$ and $y$ given by \eqref{valxy} are readily seen to be integers. 

(iii) We now consider the case when both $h$ and $k$ are even integers. First we note that if $h$ and $k$ have a common squared factor, that is, there exist integers $m, h_1$ and $k_1$, such that $h=m^2h_1, k=m^2k_1$, where $h_1, k_1$ do not have a common squared factor, and there exist infinitely many integers $n$ such that $n, n+h_1, n+k_1$ are all sums of two squares, then we immediately get infinitely many integers $m^2n$ such that $m^2n, m^2n+h$ and $m^2n+k$ are all sums of two squares.  There is thus no loss of generality in  assuming that the integers $h$ and $k$ do not have a common squared factor and  accordingly, we assume that $h$ and $k$ do not have 4 as a common factor. 

When   $h$ and $k$ do not have a common factor $4$, we may consider, without loss of generality, only the following two cases: 

$ \bullet $ When $h \equiv 2 \pmod 4$ and $k \equiv 0 \pmod 4$, we use the values of $u_i, v_i, i=1, 2$, given  by \eqref{valuvtry2} and  we  take both $f, g$ to be arbitrary even integers and $t$ to be an arbitrary odd integer when  the values of both $x$ and $y$ given by \eqref{valxy} are integers. 

$\bullet$  Finally, when $h \equiv 2 \pmod 4$ and $k \equiv 2 \pmod 4$, we again use the values of $u_i, v_i, i=1, 2$, given  by \eqref{valuvtry2} and  we now take $f \equiv 2 \pmod 4, g \equiv 1 \pmod 4$ and $t \equiv 1 \pmod 4$, when also the values of both $x$ and $y$ given by \eqref{valxy} are integers. 

In each case we can easily express each of the three parameters $f, g$ and $t$ in terms of a new independent parameter  while ensuring that the relevant parity or congruence condition is  satisfied, and hence  when $h$ and $k$ are any arbitrary integers, whether positive or negative,  we can always obtain a three-parameter  solution in integers of the simultaneous Eqs.  \eqref{eqn}-\eqref{eqnk}, and thus  obtain infinitely many integers $n=x^2+y^2$ such that the integers $n, n+h$ and $n+k$ are all expressible as sums of two nonzero squares.  

\end{proof}

\section{Sequences of integers that are sums of two nonzero squares}
We will now construct several finite sequences of integers that may all be written as sums of two squares.

\subsection{Three consecutive integers expressible as sums of two squares}
As an immediate corollary to Theorem \ref{Thmhk}, we now obtain infinitely many integers $n$ such that $n-1, n, n+1$ are all sums of two nonzero squares. As this problem has been studied for over a century, we give below   our solution as a  theorem.
\begin{theorem}\label{Thconsectriple} If the integers $x, y$ and $n$ are defined in terms of three arbitrary integer parameters $p, q$ and $r$ by the relations,
\begin{equation}
\begin{aligned}
x & =  4r(16p^2r^2 + 8pr + 4r^2 + 1)q^2 - 8r(4p^2r^2 - 4p^2r + \\
& \quad \quad 2pr + r^2 - p - r)q - 2r(4p^2r - 2p^2 + 2p + r - 1), \\
y & = -2(4pr + 1)(16p^2r^2 + 8pr + 4r^2 + 1)q^2 \\
 & \quad \quad + 4(4pr + 1)(4p^2r^2 - 4p^2r+ 2pr + r^2 - p - r)q\\
  & \quad \quad + 16p^3r^2 - 8p^3r + 8p^2r + 4pr^2 - 2p^2 - 4pr - 1,\\
	n & = x^2+y^2,
\end{aligned}
\label{valxythreeconsecint}
\end{equation}
the  three consecutive integers $n-1, n$ and $ n+1$  are all expressible as sums of two squares. 
\end{theorem}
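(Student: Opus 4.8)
Since Theorem~\ref{Thconsectriple} is presented as an immediate corollary of Theorem~\ref{Thmhk}, the plan is to make that corollary explicit and then verify the stated parametric formulas. I would apply Theorem~\ref{Thmhk} with the choice $h=1$, $k=-1$: then the integers $n$, $n+h=n+1$, $n+k=n-1$ produced by that construction form a triple of consecutive integers $n-1,\,n,\,n+1$, each a sum of two squares. Since $h$ and $k$ are both odd, this is exactly case~(i) in the proof of Theorem~\ref{Thmhk}, in which $u_i,v_i$ are taken as in \eqref{valuvtry1} with $f,g,t$ arbitrary \emph{even} integers.

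The key step is a specialization-and-simplification computation. I would write the even parameters as $f=2p$, $g=2q$, $t=2r$ with $p,q,r$ arbitrary integers; then \eqref{valuvtry1} yields $u_1=4pr+1$, $u_2=2r$, $v_1=8pqr+2p+2q$, $v_2=4qr+1$, for which $u_1v_2-u_2v_1=1$, so only a single factor of $2$ survives in the denominators of \eqref{valxy}. Substituting these values together with $h=1$, $k=-1$ into \eqref{valxy} and expanding (it is convenient to collect terms by powers of $q$), one checks that both numerators are even and that, after dividing by $2$, the resulting $x$ and $y$ are precisely the polynomials displayed in \eqref{valxythreeconsecint}. The recurring factor $16p^2r^2+8pr+4r^2+1=(4pr+1)^2+(2r)^2=u_1^2+u_2^2$ serves as a useful internal consistency check. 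Setting $n=x^2+y^2$, Eqs.~\eqref{eqnh} and \eqref{eqnk} then give $n+1=(x+u_1)^2+(y+u_2)^2$ and $n-1=(x+v_1)^2+(y+v_2)^2$, so $n-1,\,n,\,n+1$ are all sums of two squares.

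The only content beyond Theorem~\ref{Thmhk} is bookkeeping, so the main obstacle is purely computational: correctly expanding \eqref{valxy} for the above substitution and matching it term by term against \eqref{valxythreeconsecint}. Integrality of $x,y$ is not a separate difficulty — it is just the parity argument of case~(i) of Theorem~\ref{Thmhk}, obtained by reducing the numerators in \eqref{valxy} modulo $2$ using $u_1\equiv v_2\equiv1$, $u_2\equiv v_1\equiv0$, and $h\equiv k\equiv1$. Finally, for representations as sums of two \emph{nonzero} squares one observes that $x$, $y$, $x+u_1$, $y+u_2$, $x+v_1$, $y+v_2$ are nonzero polynomials in $p,q,r$, hence vanish only on a proper subvariety of $\mathbb{Z}^3$, so all but finitely many integer triples $(p,q,r)$ give genuine two-nonzero-square representations of each of $n-1,\,n,\,n+1$.
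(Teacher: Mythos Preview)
Your proposal is correct and follows exactly the paper's own approach: the paper's entire proof is the single sentence that the theorem follows from Theorem~\ref{Thmhk} by taking $(h,k,f,g,t)=(1,-1,2p,2q,2r)$ and using the relations~\eqref{valxy}. Your additional remarks on the computation, the parity check, and the nonvanishing of the summands are helpful elaborations but go beyond what the paper records.
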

\begin{proof} The theorem follows immediately from Theorem \ref{Thmhk} by taking $(h, k, f, g, t)=(1, -1, 2p, 2q, 2r)$ and  using the relations \eqref{valxy}.
\end{proof}

By assigning numerical values to the parameters in the solution \eqref{valxythreeconsecint}, we will now derive  fairly simple solutions of our problem. As an example, taking $(q, r)=(0, 0)$ yields the solution $(x, y)=(0,  -2p^2 - 1)$ which gives us the three consecutive integers $(2p^2)^2 +(2p)^2,  (2p^2+1)^2+0^2, (2p^2+1)^2+1^2. $ This example has already been given in \cite{CD}. We now note that if we take $p=17m+5$, then 
\begin{align*}
(2p^2+1)^2 &=289(34m^2 + 20m + 3)^2 \\
 &=(15(34m^2 + 20m + 3))^2 +(8(34m^2 + 20m + 3))^2,
\end{align*} 
and all the three consecutive integers may be expressed as  sums of two nonzero squares.

As a second example, when $(p, q)=(0, 0)$, the relations \eqref{valxythreeconsecint} yield the solution $(x, y)=(-2r(r-1),  -1)$, leading to the three consecutive integers, 
\begin{equation}
(2r(r-1))^2 +0^2,\quad  (2r(r-1))^2 +1^2,  \quad (2r(r-1))^2 +2, \label{consectriple}
\end{equation}
where we note that 
\begin{equation}
(2r(r-1))^2 +2 =  (2r^2 - 2r - 1)^2 + (2r-1)^2. \label{identsectriple}
\end{equation}
 Here again, if we take $r=5m$, then the first integer of the sequence may be written as $100(m(5m-1))^2= (6m(5m-1))^2+(8m(5m-1))^2$ so that all the three consecutive integers are sums of two nonzero squares.  More generally,  if we take $r=(r_1^2 + r_2^2)m$ where $r_1, r_2$ and $m$ are arbitrary nonzero integers,  then also we can write the first term of the sequence as a  sum of two nonzero squares utilizing the fact that $(r_1^2+r_2^2)^2 = (r_1^2-r_2^2)^2 + (2r_1 r_2)^2$. 

\subsection{ The four integers $n, n+1, n+2, n+4$ expressible as sums of two squares}\label{n124}
The first term of the triple of consecutive integers \eqref{consectriple} is a perfect square, and hence taking $n=(2r(r-1))^2$, we immediately get four integers $n, n+1, n+2, n+4$ that are all expressible as sums of two squares. As noted earlier, if we take $r=5m$, all the four integers may be written as sums of two nonzero squares.

A more general value of $n$, such that $n, n+1, n+2, n+4$ are all  sums of two nonzero squares, is given by the following theorem.
 \begin{theorem}\label{Thmn124}
If we define the  integer $n$ in terms of arbitrary integer parameters $m$ and $r$ as
\begin{equation}
n= ((4m^2 - 4m + 2)r^2 - (8m^3 - 8m^2 + 2)r + 4m(m + 1)(m - 1)^2)^2,
\end{equation}
the four integers are $n, n+1, n+2, n+4$ are all  expressible as sums of two squares. Further, if $m$ and $r$ are both taken as multiples of $5$, all these four integers are sums of  squares of two nonzero integers.  
\end{theorem}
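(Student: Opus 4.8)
The plan is to separate the four-term claim into a trivial part and one genuine point. Setting
\[
A = (4m^2-4m+2)r^2 - (8m^3-8m^2+2)r + 4m(m+1)(m-1)^2,
\]
so that $n=A^2$, the integers $n=A^2+0^2$, $n+1=A^2+1^2$ and $n+4=A^2+2^2$ are sums of two squares for trivial reasons. Hence the entire content of the theorem is the single assertion that $n+2=A^2+2$ is a sum of two squares, and I would prove this by exhibiting an explicit polynomial identity $A^2+2=B^2+C^2$ with $B,C\in\mathbb{Z}[m,r]$ and verifying it.

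To obtain such an $A$ (together with the accompanying $B$ and $C$) I would specialise the construction behind Theorem \ref{Thmhk}. Applying it with $(h,k)=(1,2)$ gives three integers $n,n+1,n+2$ that are sums of two squares, with $n=x^2+y^2$ and $x,y$ the rational functions of the parameters appearing in \eqref{valxy}. Now impose the extra requirement that one coordinate, say $y$, vanish. This forces $n=x^2$ to be a perfect square, whence $n+4=x^2+2^2$ is automatically a sum of two squares; moreover the construction already supplies the representations $n+1=(x+u_1)^2+u_2^2$ and $n+2=(x+v_1)^2+v_2^2$, so that $A=|x|$ and $(B,C)=(x+v_1,\,v_2)$ can simply be read off. (Equivalently, in the notation of Theorem \ref{Thconsectriple} one imposes $y=\pm1$, so that $n-1$ is the perfect square; for $m=1$ this must reproduce $n=(2r(r-1))^2$ from \eqref{consectriple} and the identity \eqref{identsectriple}, which is a useful consistency check.) The condition that $y$ vanish is one Diophantine equation in the three free parameters, and the technical core of the argument is to solve it \emph{in polynomials} rather than merely to know it is solvable. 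I expect that after reparametrising one parameter by a new parameter $m$ the equation becomes a quadratic in another parameter that factors over $\mathbb{Z}[m,r]$; clearing denominators should then produce exactly the stated $A$ and the polynomials $B,C$. Once these are in hand, the claim that $n+2$ is a sum of two squares is just the verification of a polynomial identity.

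For the last assertion, suppose $5\mid m$ and $5\mid r$. Then each of the three terms defining $A$ is divisible by $5$, so $A=5A_1$ with $A_1\in\mathbb{Z}$, and hence $n=A^2=(3A_1)^2+(4A_1)^2$ is a sum of two nonzero squares as soon as $A\neq0$ --- which excludes only finitely many $r$ for each $m$, since $A$ is a polynomial of degree $2$ in $r$ with nonvanishing leading coefficient $2(m^2+(m-1)^2)$. Likewise $n+1=A^2+1^2$ and $n+4=A^2+2^2$ are sums of two nonzero squares when $A\neq0$, and $n+2=B^2+C^2$ is a sum of two nonzero squares because $B$ and $C$ are nonzero for the relevant $m,r$ (as they are in the $m=1$ case, where $B=2r^2-2r-1$ and $C=2r-1$). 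Taking $m$ and $r$ to be multiples of $5$ outside this finite exceptional set then exhibits all four integers as sums of squares of two nonzero integers.

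The main obstacle, as indicated, is the middle step: pinning down the correct one-parameter specialisation of the three-parameter family of Theorem \ref{Thmhk} for which $n$ becomes a perfect square while $n+1$ and $n+2$ remain sums of two squares --- that is, solving the vanishing condition explicitly over $\mathbb{Z}[m,r]$. Everything after that is a routine polynomial identity together with the elementary divisibility observation above.
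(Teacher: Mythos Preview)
Your reduction is the same as the paper's: with $n=A^2$ the cases $n=A^2+0^2$, $n+1=A^2+1^2$, $n+4=A^2+2^2$ are free, and the whole content is the identity $A^2+2=B^2+C^2$ for explicit $B,C\in\mathbb{Z}[m,r]$. The divisibility argument for the ``nonzero'' clause (write $A=5A_1$ and use $25=3^2+4^2$) also matches.

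Where you diverge is in how you propose to manufacture $A,B,C$. You route everything through Theorem~\ref{Thmhk} with $(h,k)=(1,2)$ and then impose $y=0$, leaving yourself a vanishing condition in the four parameters $u_1,u_2,v_1,v_2$ that you hope will factor nicely. The paper does not touch Theorem~\ref{Thmhk} here at all. It attacks the quadric $x^2+2=u^2+v^2$ directly by a secant construction: the identity \eqref{identsectriple} already supplies a one-parameter family of points $\bigl(2s(s-1),\,2s^2-2s-1,\,2s-1\bigr)$ on this quadric, and the paper takes the line through the point with $s=r$ in the direction of the point with $s=m$,
\[
(x,u,v)=\bigl(2r(r-1),\,2r^2-2r-1,\,2r-1\bigr)+t\bigl(2m(m-1),\,2m^2-2m-1,\,2m-1\bigr),
\]
so that the quadratic in $t$ has $t=0$ as one root and the other root $t=2(m-r+1)(m-r-1)$ drops out linearly. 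Substituting back gives the stated $A$ and, simultaneously, explicit $B=u$ and $C=v$.

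Your detour through Theorem~\ref{Thmhk} is not wrong, but it carries along the redundant $n+1$ constraint $(x+u_1)^2+u_2^2=x^2+1$, which is automatic (take $u_1=0$, $u_2=1$) and only clutters the system; and it turns the clean three-variable quadric into a four-variable condition whose polynomial solvability you can only ``expect''. The paper's secant argument sidesteps exactly the obstacle you flag: since \eqref{identsectriple} already parametrises a curve on $x^2+2=u^2+v^2$, no guessing or factoring is needed.
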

\begin{proof}
Any solution of the  diophantine equation,
\begin{equation}
x^2+2 =u^2+v^2, \label{eqxuv}
\end{equation}
in nonzero integers immediately  yields values of $x$ such that $x^2+1, x^2+2, x^2+4$ are all sums of two nonzero squares. If, in addition, $x$ is a multiple of $5$, then $x^2=25z^2$ where $z$ is an integer, and we get $x^2 =(3z)^2+(4z)^2$. Thus, taking   $n=x^2$, all the integers   $n, n+1, n+2, n+4$ are sums of two nonzero squares.

To solve Eq.\eqref{eqxuv}, we write,
\begin{equation}
\begin{aligned}
 x &=2m(m - 1)t+2r(r-1),\\
 u &=(2m^2 - 2m - 1)t+2r^2 - 2r - 1,\\
 v &=(2m-1)t+2r-1,
\end{aligned}
\label{subsxuv}
\end{equation}
when, in view of the identity \eqref{identsectriple}, Eq. \eqref{eqxuv} is readily solved and we get a nonzero solution for $t$, namely $t=2(m - r + 1)(m - r - 1)$ which, when substituted in \eqref{subsxuv}, yields the following solution of Eq. \eqref{eqxuv}
\begin{equation}
\begin{aligned}
x &= (4m^2 - 4m + 2)r^2 - (8m^3 - 8m^2 + 2)r \\
& \quad \quad + 4m(m + 1)(m - 1)^2,\\
u &= 4m(m - 1)r^2 - (8m^3 - 8m^2 - 4m + 2)r\\
& \quad \quad  + 4m^4 - 4m^3 - 6m^2 + 4m + 1, \\
v &= (4m - 2)r^2 - (8m^2 - 4m - 2)r \\
& \quad \quad + 4m^3 - 2m^2 - 4m + 1,
\end{aligned}
\label{solxuv}
\end{equation}
where $m$ and $r$ are arbitrary integer parameters. The value of $n$ stated in the theorem is precisely $x^2$ where $x$ is given by \eqref{solxuv}. Thus $n+1, n+2$ and $n+4$ are all expressible as sums of two nonzero squares. If both $m$ and $r$ are multiples of 5, then $x$ is also a multiple of $5$, and hence  $n$ is a multiple of $25$, and hence $n$  may also be written as a sum of  squares of 
two nonzero integers.
\end{proof}
In Theorem \ref{Thmn124} while we have stated, for simplicity, that $m$ and $r$ may be taken as multiples of 5, the theorem remains true when both $m$ and $r$ are multiples of any integer that is a sum of two squares.

\subsection{Infinitely many integers $n$ such that $n, n+1, n+2, n+4$ and $n+5$ are all sums of two nonzero squares}\label{n1245}
\begin{theorem}\label{Thmn1245} There exist infinitely many integers $n$ such that the  five integers $n, n+1, n+2, n+4, n+5$ are all sums of two nonzero squares.
\end{theorem}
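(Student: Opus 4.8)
The plan is to reduce the five-integer problem, exactly as in the proof of Theorem~\ref{Thmn124}, to a statement about a single perfect square. Suppose $x$ is a nonzero integer for which $x^2$, $x^2+2$ and $x^2+5$ are each a sum of two nonzero squares. Then $n=x^2$ works, because $n=x^2$, $n+1=x^2+1^2$, $n+2=x^2+2$, $n+4=x^2+2^2$ and $n+5=x^2+5$ (the index $n+3$ being skipped, as it must be). So three of the five conditions are automatic once $x\ne 0$, and it is enough to produce infinitely many $x$ such that $x^2$ is a sum of two nonzero squares while $x^2+2$ and $x^2+5$ are sums of two squares.

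For the condition on $x^2+2$ I would recycle the parametrisation already available: by the identity~\eqref{identsectriple}, the choice $x=2r(r-1)$ gives $x^2+2=(2r^2-2r-1)^2+(2r-1)^2$ for every integer $r$, and both summands are nonzero since neither $2r-1$ nor $2r^2-2r-1$ ever vanishes. Within this one-parameter family I then need $x^2+5$ to become a sum of two squares as well. For that I would look for a representation of the shape $x^2+5=(x-2)^2+w^2$, which holds precisely when $w^2=4x+1$. Substituting $x=2r(r-1)$ converts this to $w^2=8r(r-1)+1=2(2r-1)^2-1$, that is, to the negative Pell equation $w^2-2u^2=-1$ with $u=2r-1$. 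This equation has infinitely many integer solutions, and in every solution $w$ and $u$ are both odd, so each solution yields an integer $r=(u+1)/2$; for such $r$ one has $x-2=2r(r-1)-2\ne 0$ and $w^2=2u^2-1\ge 1$, so $x^2+5=(x-2)^2+w^2$ is a sum of two nonzero squares.

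It remains to make $x^2=(2r(r-1))^2$ a sum of two \emph{nonzero} squares, for which it suffices that $5\mid r$: then $5\mid x$, so $x=5y$ with $y\ne 0$ and $x^2=(3y)^2+(4y)^2$. The admissible values of $r$ form the sequence $r_1=1,\ r_2=3,\ r_3=15,\ r_4=85,\dots$ obeying the linear recurrence $r_{k+1}=6r_k-r_{k-1}-2$ inherited from the Pell recurrence for $u=2r-1$. Reducing this recurrence modulo $5$ gives the purely periodic sequence $1,3,0,0,3,1,\,1,3,0,0,3,1,\dots$, so $5\mid r_k$ for infinitely many $k$ (and for all such $k$ one has $r_k>1$, hence $x\ne 0$). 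Selecting these $r_k$ produces infinitely many $n=(2r(r-1))^2$ for which $n$, $n+1$, $n+2$, $n+4$, $n+5$ are all sums of two nonzero squares, which is the assertion of the theorem.

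The only real obstacle is imposing the two conditions on $x^2+2$ and $x^2+5$ at the same time. The polynomial family $x=2r(r-1)$ that trivialises the first does not trivialise the second for generic $r$, while the family $x=2(g^2-1)$ that trivialises the second---via $x^2+5=(2g^2-3)^2+(2g)^2$---fails the first, since matching the two would force $r(r-1)=g^2-1$, i.e. $r^2-r+1=g^2$, whose only integer solutions are $r\in\{0,1\}$ and hence give $x=0$. Forcing both conditions simultaneously is exactly what collapses the problem onto the genus-zero equation $w^2=2(2r-1)^2-1$ above; after that, the remaining work---verifying the identities, the nonvanishing of all the squares, and the residues of the Pell sequence modulo $5$---is routine.
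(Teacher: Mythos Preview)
Your proof is correct and, despite the different packaging, is essentially the same as the paper's: the paper applies Theorem~\ref{Thmhk} with $(h,k,u_1,v_1)=(2,5,-1,-2)$ and $y=0$, which amounts precisely to your ans\"atze $x^2+2=(x-1)^2+u_2^2$ and $x^2+5=(x-2)^2+v_2^2$, and after writing $u_2=2r-1$ one has $x=(u_2^2-1)/2=2r(r-1)$ and the identical negative Pell equation $v_2^2-2u_2^2=-1$. Your divisibility condition $5\mid r$ is a sub-family of the paper's condition $u_2^2\equiv 1\pmod 5$ (which allows $r\equiv 0$ or $1\pmod 5$), and your first nontrivial value $r=15$ reproduces the paper's leading example $n=420^2$.
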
 
\begin{proof}
We will apply Theorem \ref{Thmhk} taking $h=2$ and $k=5$ to  obtain  infinitely many integers such that $n, n+2$ and $n+5$ are all sums of two squares. In fact, we will use the relations \eqref{valxy} and choose the parameters $u_i, v_i, i=1, 2$, such that the values of $x$ and $y$ given by \eqref{valxy} are integers, and moreover, $y=0$. We then get $n=x^2$, and hence $n+1=x^2+1^2$ and $n+4=x^2+2^2$ can be written as sums of two squares. We will also ensure that $x$ is a multiple of 5, that is, $x=5z$ where $z$ is an integer, so that $n=x^2=5^2z^2=(3z)^2+(4z)^2$. We will thus get infinitely many integers $n$ such that $n, n+1, n+2, n+4, n+5$ are all sums of two nonzero squares.  

On writing $(h, k, u_1,  v_1) =(2, 5, -1, -2)$, we get the following values of $x, y$ from \eqref{valxy}:
\begin{equation}
\begin{aligned}
x &= -(u_2^2v_2 - u_2v_2^2 + u_2 - v_2)/(2(2u_2 - v_2)), \\
y & = -(2u_2^2 - v_2^2 - 1)/(2(2u_2 - v_2)).
\end{aligned}
\end{equation}

We note that the numerator of $x$ may be written as follows:
\[(u_2^2 - 1)(2u_2 - v_2)-u_2(2u_2^2 - v_2^2 - 1).\]  
We now choose $u_2, v_2$ to be  integers such that 
\begin{equation}
2u_2^2 - v_2^2 -1=0, \label{equv}
\end{equation}
and  we then  get 
$(x, y)=((u_2^2-1)/2, 0)$. It follows from congruence considerations modulo 2  that  both $u_2, v_2$ must be odd integers, and hence  $x$ will be an integer.  As an example, the solution $(u_2, v_2)=(29, 41)$ of Eq. \eqref{equv} yields $(x, y) =(420, 0)$. This yields an example with $n=420^2$ such that the integers $n, n+1, n+2, n+4$ and $n+5$ are all expressible as sums of two squares of nonzero integers. 

Eq. \eqref{equv} may be re-written, on writing $(u_2, v_2)= (\alpha, \beta)$, as  the classical negative Pell equation 
\begin{equation}
\beta^2-2 \alpha^2=-1, \label{negPell}
\end{equation}
and here we need solutions of \eqref{negPell}  that satisfy the additional condition $\alpha^2 \equiv 1 \pmod 5$ so that  the resulting values of $x$ are multiples of 5.
 
The theory of the negative Pell equation is well-known (see \cite[pp.\ 201--204]{Na}). Eq. \eqref{negPell} has infinitely many solutions, its  fundamental solution  is  $(\alpha_1, \beta_1)= (1, 1)$, and all  solutions $(\alpha_r, \beta_r)$ in positive integers may be obtained from the relations    $\beta_r+ \alpha_r \sqrt{2} = (1+ \sqrt{2})^{2r-1}, r=1, 2, 3, \ldots $. 

We note that $\beta_2+ \alpha_2 \sqrt{2}= 7+5 \sqrt{2}$ and $\beta_3+ \alpha_3 \sqrt{2}= 41+29 \sqrt{2}$.  While the solutions $(\alpha_1, \beta_1)= (1, 1)$ and $(\alpha_3, \beta_3)= (29, 41)$ of Eq. \eqref{negPell} satisfy the congruence condition $\alpha^2 \equiv 1 \pmod 5$, the second solution $(\alpha_2, \beta_2)= (5, 7)$ does not satisfy this condition.

If, for any positive integer value of $m$, the solution  $(\alpha_m, \beta_m)$  of Eq. \eqref{negPell} satisfies the congruence condition $\alpha^2 \equiv 1 \pmod 5$, we shall show that the solution $(\alpha_{3+m}, \beta_{3+m})$  also satisfies the condition  $\alpha^2 \equiv 1 \pmod 5$. Since
\begin{multline*}
\beta_{3+m}+ \alpha_{3+m} \sqrt{2} =(1+ \sqrt{2})^{6+2m-1} =(1+ \sqrt{2})^6(1+ \sqrt{2})^{2m-1}\\
\quad \quad = (99 + 70 \sqrt{2})(\beta_m+ \alpha_m \sqrt{2}) = (140 \alpha_m+99\beta_m) + (99 \alpha_m +70 \beta_m) \sqrt{2},
\end{multline*}
we get $\alpha_{3+m}=99 \alpha_m +70 \beta_m$.  Hence $\alpha^2_{3+m} \equiv (99 \alpha_m)^2 \equiv \alpha^2_m \equiv 1 \pmod 5$. Thus,  the solution $(\alpha_{3+m}, \beta_{3+m})$   satisfies the congruence condition  $\alpha^2 \equiv 1 \pmod 5$,
and it now follows by induction that for all positive integer values of $k$, the solution $(\alpha_{3k+m}, \beta_{3k+m})$  satisfies the congruence condition  $\alpha^2 \equiv 1 \pmod 5$. 

We have already seen that when $m=1$ and  also when $m=3$, the solution  $(\alpha_m, \beta_m)$  satisfies the condition  $\alpha^2 \equiv 1 \pmod 5$. It follows that for all positive integer values of $k$, the solutions $(\alpha_{3k+1}, \beta_{3k+1})$ and $(\alpha_{3k+3}, \beta_{3k+3}) $  satisfy the congruence condition  $\alpha^2 \equiv 1 \pmod 5$. We can thus generate infinitely many integer solutions of Eq. \eqref{negPell}  such that $\alpha^2 \equiv 1 \pmod 5$. These solutions yield infinitely many examples of finite sequences of integers $n, n+1, n+2, n+4$ and $n+5$ that are all expressible as sums of two squares of nonzero integers. 
\end{proof}

We have already noted  two  integer solutions of Eq. \eqref{negPell} that  satisfy the congruence condition $\alpha^2 \equiv 1 \pmod 5$ namely, $(\alpha_1, \beta_1)=(1, 1) $ and $(\alpha_3, \beta_3)= (29, 41)$. While the solution $(\alpha_1, \beta_1)$ yields $x=0$ and we get $n=0$, the solution $(\alpha_3, \beta_3)$ yields $x=420$ and hence we get $n=420^2=176400$  such that the five integers $n, n+1, n+2, n+4$ and $n+5$ are expressible as  sums of two nonzero squares. In fact, we have,
\begin{align*}
176400&=252^2+336^2, &176401& =1^2+420^2, \\
176402&=29^2+419^2, & 176404&=2^2+420^2,  \\
 176405 &=41^2+418^2.  && 
\end{align*}

The next four smallest integer solutions of Eq. \eqref{negPell} that  satisfy the congruence condition $\alpha^2 \equiv 1 \pmod 5$ are as follows:
\[
(169, 239), \quad (5741, 8119), \quad (33461, 47321), \quad (1136689, 1607521).
\]
These solutions yield the following four values of $n$ such that the integers $n, n+1, n+2, n+4$ and $n+5$ are all expressible as  sums of two nonzero squares:
\[
14280^2, \quad 16479540^2, \quad 559819260^2, \quad 646030941360^2.
\]

We give below the representations of the five integers $n, n+1, n+2, n+4$ and $n+5$ as  sums of two nonzero squares when $n=14280^2=203918400$ and $n=16479540^2=271575238611600$:
\begin{align*}
203918400 & = 8568^2+11424^2, \\
203918401 & = 1^2+14280^2,\\
203918402 & = 169^2+14279^2,\\
203918404 & = 2^2+14280^2,\\
203918405 & = 239^2+14278^2,
\end{align*}
and
\begin{align*}
271575238611600 & = 9887724^2+ 13183632^2, \\
271575238611601 & = 1^2+16479540^2,\\
271575238611602 &= 5741^2+16479539^2, \\
271575238611604 & = 2^2+16479540^2,\\
271575238611605 &=8119^2+16479538^2.
\end{align*}

\subsection{Infinitely many integers $n$ such that all five terms of the arithmetic progression $n, n+4, \ldots, n+16$ are   sums of two nonzero squares}
\begin{theorem}\label{ThmAP}
There exist infinitely many integers $n$ such that all five terms of the arithmetic progression $n, n+4, \ldots, n+16$ are   sums of two nonzero squares. 
\end{theorem}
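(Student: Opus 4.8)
The plan is to mimic the proof of Theorem \ref{Thmn1245}: I will produce infinitely many perfect squares $n=x^2$ with $x$ a positive multiple of $5$ such that $x^2+8$ and $x^2+12$ are both sums of two nonzero squares. Then, writing $x=5z$, we have $n=x^2=(3z)^2+(4z)^2$, $n+4=x^2+2^2$ and $n+16=x^2+4^2$, so three of the five offsets in $\{0,4,8,12,16\}$ take care of themselves, and together with the remaining terms $n+8$ and $n+12$ all five integers $n,n+4,n+8,n+12,n+16$ are sums of squares of two nonzero integers.

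The device that makes the two remaining conditions tractable is to impose them in a form that collapses to a single Pell equation. Following the substitution method used in the proofs of Theorems \ref{Thmhk} and \ref{Thmn1245}, with $h=8$, $k=12$ and with $n$ itself taken to be a square, I would ask for the representations $x^2+8=(x-2)^2+u^2$ and $x^2+12=(x-1)^2+v^2$. The first holds exactly when $u^2=4x+4$, the second exactly when $v^2=2x+11$; eliminating $x$ gives $u^2-2v^2=-18$. This norm equation is solvable, $(u,v)=(0,3)$ being a solution, hence it has infinitely many solutions, the orbit of $(0,3)$ under the automorphism $(u,v)\mapsto(3u+4v,\,2u+3v)$ coming from the unit $3+2\sqrt2$ yielding $(12,9),(72,51),(420,297),\dots$. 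Each solution gives $x=(u^2-4)/4$; since $u\equiv0\pmod4$ along the orbit — a one-line induction from $u\mapsto3u+4v\equiv-u\pmod4$ — this $x$ is an (odd) integer, so $n=x^2\equiv1\pmod8$, and the two displayed identities exhibit $n+8$ and $n+12$ as sums of two nonzero squares as soon as $u,v>0$.

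It then remains to guarantee $5\mid x$ for infinitely many of these solutions, and here I would argue as in Theorem \ref{Thmn1245}: $5\mid x$ is equivalent to $u^2\equiv4\pmod5$, i.e. $u\equiv\pm2\pmod5$, and the sequence $(u_j,v_j)\bmod5$ is purely periodic with period $6$, the residues $u_j\bmod5$ running through $0,2,2,0,3,3$. Thus $u_j\equiv\pm2\pmod5$ precisely when $j\not\equiv0,3\pmod6$; for those (infinitely many) indices the squares $n=x_j^2$ are distinct, and each provides an arithmetic progression $n,n+4,n+8,n+12,n+16$ of five sums of two nonzero squares, proving the theorem.

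The step I expect to require some ingenuity is the choice of the shifts ``$x-2$'' and ``$x-1$''. Many natural alternatives lead to unsolvable equations — for instance, asking that $x^2+12$ itself have a representation $y^2+z^2$ with $y+z=3$ (so that $x^2+8$ and $x^2+12$ are read off from the same representation) forces $2x^2+15$ to be a perfect square, and $m^2-2x^2=15$ has no solutions by a congruence obstruction modulo $3$. One must therefore choose the shifts so that the two conditions merge into a Pell equation that is actually solvable; that is the only genuinely non-mechanical point in the argument.
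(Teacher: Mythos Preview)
Your argument is correct and follows the same template as the paper's proof: set $n=x^2$ so that $n+4$ and $n+16$ are automatic, force the two remaining terms $n+8$ and $n+12$ onto a single Pell-type equation by a judicious choice of shifts, and then sieve out the solutions for which $x$ has a prescribed divisor so that $n$ itself splits as a sum of two nonzero squares, the divisibility being guaranteed for infinitely many indices by periodicity of the Pell orbit modulo that divisor.

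The specific choices differ. The paper takes the shifts $x-1$ and $x-3$ (i.e.\ $(u_1,v_1)=(-1,-3)$ in the notation of Theorem~\ref{Thmhk}), which leads to $\beta^2-3\alpha^2=-18$ with $x=(\alpha^2-7)/2$, and then imposes $37\mid x$, arguing via the period-$18$ recurrence modulo $37$. Your shifts $x-2$ and $x-1$ give instead $u^2-2v^2=-18$ with $x=(u^2-4)/4$, and you need only $5\mid x$, handled by a period-$6$ recurrence modulo $5$. Your version is tidier: the modulus is smaller, the period shorter, and the first nontrivial example is $n=35^2=1225$ rather than $n=37^2=1369$, with the subsequent examples growing far more slowly than in the paper (your next admissible $x$ is $1295$, whereas the paper's next is a $23$-digit number). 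Both routes prove the theorem; yours simply lands on a more convenient Pell equation.
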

\begin{proof}
We will follow  a procedure similar to that of Section \ref{n1245} to  obtain integers $n$ with the desired property.

We will take $(h, k)=(8, 12)$ and apply Theorem \ref{Thmhk} by choosing the parameters $u_i, v_i, i=1, 2$, such that the relations \eqref{valxy} yield an integer value of $x$, and $y=0$. We will thus get $n=x^2$ and hence $n+4, n+8, n+12$ and $n+16$ can all be  written as sums of two nonzero squares. We will also ensure that $x$ is a multiple of 37, that is, $x=37z$, and hence $n=x^2=37^2z^2=(12z)^2+(35z)^2$. Thus all the five integers $n, n+4, \ldots, n+16$ can be written as sums of two nonzero squares. We will show that the method generates infinitely many integers $n$ with this property.  The choice of the integer 37 is based on numerical data obtained by computer trials. 

On writing $(h,k,u_1, v_1)=(8, 12, -1, -3)$ the relations \eqref{valxy} yield the following values of $x$ and $y$:
\begin{equation}
\begin{aligned}
x &= -(u_2^2v_2 - u_2v_2^2 + 3u_2 - 7v_2)/(2(3u_2 - v_2)),\\
y & = -(3u_2^2 - v_2^2 - 18)/(2(3u_2 - v_2)).  
\end{aligned}
\label{tryxyAP}
\end{equation}

The numerator of $x$ may be written as $(3u_2^2 - v_2^2 - 18)u_2 - (u_2^2 - 7)(3u_2 - v_2)$, hence if we choose the parameters $u_2, v_2$ such that 
\begin{equation}
3u_2^2 - v_2^2 - 18=0. \label{PellAP}
\end{equation}
the values of $x, y$ given by \eqref{tryxyAP} may be written as follows: 
\begin{equation}
x=  (u_2^2 - 7)/2,\quad  y=0. \label{tryxyAPsec}
\end{equation}
It follows from congruence considerations modulo 4 that if $(u_2, v_2)$ is a solution in integers of Eq. \eqref{PellAP}, then both $u_2$ and $v_2$ must be odd integers, and hence the value of $x$ given by \eqref{tryxyAPsec} is necessarily an integer. 

Now Eq. \eqref{PellAP} may be re-written, on writing $(u_2, v_2)= (\alpha, \beta)$, as  the generalized Pell equation 
\begin{equation}
\beta^2-3 \alpha^2=-18, \label{Pellgen}
\end{equation}
and here we need solutions of \eqref{Pellgen}  that satisfy the additional condition $\alpha^2 \equiv 7 \pmod {37}$ so that  the resulting values of $x$ are multiples of 37. 
 
The smallest solution in positive integers of Eq. \eqref{Pellgen} is $(\alpha, \beta)=(3, 3)$, and the fundamental solution of the related Pell equation $\beta^2-3 \alpha^2=1$ is $(\alpha, \beta)=(1, 2)$. It, therefore, follows from the theory of the generalized Pell equation (see \cite[pp.\ 204--209]{Na}) that infinitely many  solutions $(\alpha_r, \beta_r)$ of Eq. \eqref{Pellgen} may be obtained from the relations $\beta_r+\alpha_r \sqrt{3}=(3+3 \sqrt{3})(2+ \sqrt{3})^r, r=1, 2, 3, \ldots $. We note that  $\beta_1+\alpha_1 \sqrt{3}=15 + 9 \sqrt{3}$, and the solution $(\alpha_1, \beta_1) = (9, 15)$ satisfies the congruence condition $\alpha^2 \equiv 7 \pmod {37}$.

If, for any positive integer value of $m$, the solution  $(\alpha_m, \beta_m)$  of Eq. \eqref{Pellgen} satisfies the congruence condition $\alpha^2 \equiv 7 \pmod {37}$, we shall show that the solution $(\alpha_{18+m}, \beta_{18+m})$ also satisfies the condition  $\alpha^2 \equiv 7 \pmod {37}$. Since
\begin{equation*}
\begin{aligned}
\beta_{18+m}+\alpha_{18+m} \sqrt{3}& =(3+3 \sqrt{3})(2+ \sqrt{3})^{18+m} \\
 & =(2+ \sqrt{3})^{18} (3+3 \sqrt{3})(2+ \sqrt{3})^m\\
 & =(9863382151 + 5694626340 \sqrt{3})(\beta_m+\alpha_m \sqrt{3}) \\
 & =17083879020 \alpha_m + 9863382151 \beta_m \\
& \quad \quad +(9863382151 \alpha_m+ 5694626340 \beta_m) \sqrt{3},
\end{aligned}
\end{equation*}
we get $\alpha_{18+m}=9863382151 \alpha_m+ 5694626340 \beta_m$. We note that $9863382151 \equiv -1 \pmod {37}$ and   $5694626340 \equiv 0 \pmod{37}$, hence $\alpha^2_{18+m} \equiv \alpha^2_m \equiv 7 \pmod {37}$. Thus,  the solution $(\alpha_{18+m}, \beta_{18+m})$   satisfies the congruence condition  $\alpha^2 \equiv 7 \pmod {37}$,
and it now follows by induction that for all positive integer values of $k$,  the  solution $(\alpha_{18k+m}, \beta_{18k+m})$ of Eq. \eqref{Pellgen}  satisfies the  congruence condition $\alpha^2 \equiv 7 \pmod {37}$. 

We have already seen that the solution  $(\alpha_1, \beta_1)$  satisfies the condition  $\alpha^2 \equiv 7 \pmod {37}$. It follows that for all positive integer values of $k$, the solutions $(\alpha_{18k+1}, \beta_{18k+1})$ satisfy  the congruence condition  $\alpha^2 \equiv 7 \pmod {37}$. We can thus generate infinitely many integer solutions of the generalized Pell Eq. \eqref{Pellgen}  such that $\alpha^2 \equiv 7 \pmod {37}$, and these solutions yield infinitely many values of $x$ that are multiples of 37. These solutions thus yield infinitely many examples of finite sequences of integers $n, n+4, n+8, n+12$ and $n+16$ that are all expressible as sums of two squares of nonzero integers. 
\end{proof}

As numerical examples,   the first three values of $n$ that we get such that all the five integers $n+4i, i=1, \ldots, 4$, are expressible as sums of two nonzero squares are $37^2$,  $15171049214426911911337^2$ and
\begin{equation}
5903741433259753755776680512005460787523437^2. \label{largen}
\end{equation}
As the representations of $n+4$ and $n+16$ as sums of two squares are obvious, we just give below the representations of $n, n+8$ and $n+12$ as sums of two nonzero squares.

When $n=37^2$, we have 
\[
37^2=12^2+35^2, 37^2+8= 36^2+9^2, 37^2+12=15^2+34^2,
\]
when $n=15171049214426911911337^2$, we have
\[
\begin{aligned}
n & = 4920340285760079538812^2
 \\ & \quad \quad +14350992500133565321535^2, \\
n+8 & =174189834459^2 +15171049214426911911336^2,\\
n+12 & =301705643445^2 +15171049214426911911334^2.
\end{aligned}
\]
and when $n$ is given by \eqref{largen}, we have
\[
\begin{aligned}
&n=1914726951327487704576220706596365660818412^2\\
&+ 5584620274705172471680643727572733177387035^2,&\\
&n+8 =3436201808177090682609^2\\
& +5903741433259753755776680512005460787523436^2,\\
&n+12 = 5951676116822766300375^2\\
& +5903741433259753755776680512005460787523434^2.
\end{aligned}
\]

\section{Concluding remarks}

In this paper,   given any two arbitrary integers $h$ and $k$,  we established a three-parameter solution to the problem of finding integers $n$ such that $n,  n+h$ and $n+k$ are all representable as sums of squares of two nonzero integers.  Further, we found parametric solutions to the problems of finding integers $n$ such that the three consecutive integers $n-1, n, n+1$, and all the four integers $n, n+1, n+2, n+4$, are representable as sums of squares of two nonzero integers. We also found infinitely many integers $n$ such that the integers $n,  n+1,  n+2,  n+4$ and $n+5$ are all expressible as sums of two nonzero squares, and similarly, we found infinitely many integers $n$ such that all the terms of the arithmetic progression  $n+4j,  j=0, 1,  \cdots, 4, $ are sums of two nonzero squares.

{\bf Acknowledgement:} We sincerely thank the anonymous referee for carefully reading our manuscript and giving constructive suggestions that have led to improvements in the paper. The second author wishes to thank SERB for the MATRICS grant MTR/2022/000545.

\end{document}